\documentclass[12pt,oneside]{amsart}
\usepackage{graphicx, amssymb,amscd, verbatim}
\usepackage{euscript}
\usepackage{color}

\textwidth15cm
\hoffset=-1.1cm

      \theoremstyle{plain}
      \newtheorem{theorem}{Theorem}[section]
      
      \newtheorem{lemma}[theorem]{Lemma}
      
      \newtheorem{proposition}[theorem]{Proposition}
      \newtheorem{remark}[theorem]{Remark}

\numberwithin{equation}{section}

      \makeatletter
      \def\@setcopyright{}
      \def\serieslogo@{}
      \makeatother

\def\J{{\mathcal J}}

\def\V{\mathcal V}
\def\Vf{{\mathcal V^f}}
\def\VL{{\mathcal V^L}}
\def\Ef{\mathcal E^f}
\def\EL{\mathcal E^L}

\def\M{\mathcal{M}}
\def\E{\mathcal{E}}

\def\R{\mathbb R}
\def\Q{\mathbb Q}
\def\C{\mathbb C}
\def\Z{\mathbb Z}
\def\N{\mathbb N}
\def\T{\mathbb T}

\def\J{\text{Jac}}

\def\QED{\hfill\hfill{\square}}

\begin{document}

\date{\today}
\author{Andrey Gogolev$^\ast$, Boris Kalinin$^{\ast\ast}$, Victoria Sadovskaya$^{\ast\ast\ast}$}

\address{Department of Mathematics, Ohio State University,  Columbus, OH 43210, USA}
 \email{gogolyev.1@osu.edu}

\address{Department of Mathematics, The Pennsylvania State University, 
University Park, PA 16802, USA}
\email{kalinin@psu.edu, sadovska@psu.edu}

\title[Local rigidity of Lyapunov spectrum for toral automorphisms]
{Local rigidity of Lyapunov spectrum for toral automorphisms}

\thanks{$^\ast$ Supported in part by NSF grant DMS-1823150}
\thanks{$^{\ast\ast}$  Supported in part by Simons Foundation grant 426243}
\thanks{$^{\ast\ast\ast}$ Supported in part by NSF grant DMS-1764216}

\begin{abstract}
We study the regularity of the conjugacy between an Anosov automorphism 
$L$ of a torus  and its small perturbation. We assume that $L$ 
has no more than two eigenvalues of the same modulus and that $L^4$ is
irreducible over $\Q$. We consider a volume-preserving $C^1$-small perturbation 
$f$ of $L$. 
We show that if  Lyapunov exponents of $f$ with respect to the volume 
are the same as Lyapunov exponents of $L$, then $f$ is $C^{1+\text{H\"older}}$ 
conjugate to $L$. 
Further,  we establish a similar result for irreducible partially hyperbolic 
automorphisms with two-dimensional center bundle.

\end{abstract}

\maketitle


\section{Introduction and Statements of Results}

Hyperbolic and partially hyperbolic dynamical systems have been one of the 
main objects of study in smooth dynamics. Anosov automorphisms of tori 
are the prime examples of hyperbolic systems. The action of a hyperbolic matrix 
$L\in SL(d,\Z)$ on $\R^d$ induces an automorphism
of the torus $\T^d=\R^d/\Z^d$. It is well-known that any diffeomorphism 
$f$ which is sufficiently $C^1$ close to $L$ is also Anosov and 
topologically conjugate to $L$, i.e. there is a homeomorphism
$h$ of $\T^d$ such that
$$
L = h^{-1} \circ  f \circ h.
$$
The conjugacy  $h$ is unique in the homotopy class of the identity.
It is only H\"older continuous in general, and  various sufficient conditions for $h$
to be smooth have been studied. For  Anosov systems 
with one-dimensional stable and unstable distributions coincidence of eigenvalues 
of the derivatives of return maps at the corresponding periodic points 
was shown to imply smoothness of $h$ \cite{L0,LM, L1,P}.
For  higher dimensional systems even conjugacy of the derivatives of the 
corresponding return maps is not sufficient in general  \cite{L1,L2}. 
Nonetheless, 
smoothness of the conjugacy was established in higher dimensions under various 
additional assumptions \cite{L2,KS03,L3,KS09,G,GKS}.  The last paper establishes 
local rigidity for the most general class of hyperbolic automorphisms. 
\vskip.2cm
\noindent  \cite[Theorem 1.1]{GKS} {\it
Let $L:\T^d\to\T^d$ be an irreducible Anosov automorphism
such that no three of its eigenvalues have the same modulus.
Let  $f$ be a $C^1$-small perturbation of $L$ such that the
derivative $D_pf^n$ is conjugate to $L^n$ whenever $p=f^np$.
Then $f$ is $C^{1+\text{H\"older}}$ conjugate to $L$.
}
\vskip.2cm

We recall that a toral automorphism $L$ is {\em irreducible}\, if it has 
no rational invariant subspaces, or equivalently if its characteristic polynomial 
is irreducible over $\Q$. It follows that all eigenvalues of $L$ are simple.

Recently, R. Saghin and J. Yang obtained smoothness of the conjugacy $h$
for a volume preserving perturbation $f$ of an irreducible $L$ if 
$f$ and $L$ have the same {\em simple} Lyapunov 
exponents with respect to the volume~\cite{SY}.
The main result of this paper is the following theorem, which 
allows for double Lyapunov exponents and extends Saghin-Yang result to 
a much broader class of irreducible hyperbolic automorphisms.

\begin{theorem} \label{main}
Let $L:\T^d\to\T^d$ be an Anosov automorphism such that no three of 
its eigenvalues have the same modulus and its forth power $L^4$ is irreducible.
Let  $f$ be a volume-preserving $C^2$ diffeomorphism of $\T^d$ sufficiently 
$C^1$-close to $L$. If the Lyapunov exponents of $f$ with respect to the volume 
are the same as the Lyapunov exponents of $L$, then $f$ is $C^{1+\text{H\"older}}$ conjugate to $L$.
\end{theorem}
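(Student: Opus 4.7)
The plan is to reduce Theorem~\ref{main} to the periodic data criterion of \cite[Theorem~1.1]{GKS} via the hypothesis on Lyapunov exponents. Structural stability and $C^1$-closeness provide a H\"older conjugacy $h$, homotopic to the identity, with $L = h^{-1}\circ f\circ h$, together with an $f$-invariant dominated splitting $T\T^d = \bigoplus_{i} E_i^f$ whose summands correspond via $h$ to the grouping of $L$-eigenspaces by modulus. The assumption that no three eigenvalues of $L$ share a modulus forces each $E_i^f$ to have dimension one or two, and irreducibility of $L^4$ keeps the algebraic structure of the $L$-blocks rigid under iteration (in particular, it prevents the blocks with distinct moduli from acquiring common eigenvalues of some power).

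For a one-dimensional summand $E_i^f$, the cocycle $\phi_i(x) = \log\|Df|_{E_i^f(x)}\|$ is H\"older, and by the matching hypothesis it integrates to $\log|\lambda_i|$ against the invariant volume. A Livshitz-type argument, using Anosov closing and Bowen's equidistribution of periodic orbits, upgrades this integrated equality to a pointwise one: $\phi_i$ is cohomologous to the constant $\log|\lambda_i|$. Consequently $\|Df^n|_{E_i^f(p)}\| = |\lambda_i|^n$ at every $f$-periodic point $p$ of period $n$, matching the corresponding one-dimensional block of $L^n$ up to sign.

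The two-dimensional summands are the heart of the matter. By domination together with the matching of Lyapunov exponents as multisets, both exponents of the restricted cocycle $Df|_{E_i^f}$ equal $\log|\lambda_i|$. A conformal-structure theorem of Kalinin--Sadovskaya type then yields a H\"older $f$-invariant conformal structure on $E_i^f$ relative to which the derivative cocycle acts conformally. Hence each return map $Df^n|_{E_i^f(p)}$ is conformal with eigenvalues of modulus $|\lambda_i|^n$, that is, either a complex conjugate pair or a real pair $\pm|\lambda_i|^n$. The arithmetic rigidity provided by irreducibility of $L^4$ should then force the angle (respectively sign) at every periodic point to coincide with that of the corresponding block of $L^n$, so that the periodic derivatives of $f$ are conjugate to those of $L$.

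Once the full periodic data is matched, \cite[Theorem~1.1]{GKS} applies and delivers the $C^{1+\text{H\"older}}$ regularity of $h$. The principal obstacle is the two-dimensional step: upgrading coincidence of Lyapunov exponents to full conjugacy of the return maps on $E_i^f$ is genuinely delicate, and the interaction between the conformal structure produced by the exponent coincidence and the arithmetic rigidity provided by $L^4$ irreducibility is where the essential work of the proof should lie.
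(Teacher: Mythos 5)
Your reduction to the periodic data theorem \cite[Theorem 1.1]{GKS} hinges on the claim that, because the H\"older function $\phi_i=\log\|Df|_{E_i^f}\|$ has volume average $\log|\lambda_i|$, a ``Livšic-type argument with Anosov closing and Bowen equidistribution'' makes $\phi_i$ cohomologous to the constant $\log|\lambda_i|$, so that the periodic derivatives of $f$ match those of $L$. This step fails: equality of the integral against a single invariant measure does not make the periodic obstructions vanish, and Livšic's theorem requires exactly those obstructions (equidistribution of periodic orbits only controls limits of orbit averages, not each individual periodic sum; moreover the uniformly weighted periodic orbits equidistribute to the measure of maximal entropy, not the volume). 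Equality of volume Lyapunov exponents is strictly weaker than conjugacy of periodic data --- this is precisely why the theorem is not a corollary of \cite{GKS} --- so your scheme begs the question at its central step. The same gap reappears in the two-dimensional blocks: even granting conformality of $Df|_{E_i^f}$, to know that the return map at every periodic point has eigenvalues of modulus $|\lambda_i|^n$ you need the Jacobian of $f$ along the corresponding foliation to be cohomologous to the constant $\det L|_{E_i^L}$, which again does not follow from an average; and you concede that the matching of angles and signs is not established, while \cite[Theorem 1.1]{GKS} needs genuine conjugacy of $D_pf^n$ to $L^n$. Finally, conformality itself is not automatic from the coincidence of the two exponents: the continuous amenable reduction gives a trichotomy (conformal structure, invariant line field, or invariant pair of transverse line fields), and the latter two cases must be excluded.

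The paper obtains the missing cohomological information by a measure-theoretic argument rather than through periodic points, and this is the real content. One first shows $h_*(m)=\mu$, since both are measures of maximal entropy for $f$ (Pesin's formula together with the equality of exponents). Ledrappier's criterion then shows that the conditional measures of $\mu$ along the leaves of $V_i^f$ are absolutely continuous, and the resulting density $\rho$ satisfies $\rho(x)=k\,J(x)\,\rho(fx)$ with $k^{-1}=\det L|_{V_i^L}$, i.e.\ the Jacobian along $V_i^f$ is \emph{measurably} cohomologous to a constant; the measurable Livšic theorem upgrades $\rho$ to a H\"older function. On two-dimensional blocks, the invariant line fields in the amenable-reduction trichotomy are excluded using the fixed point of $f$ and the fact that irreducibility of $L^4$ forces a genuinely complex eigenvalue pair on $E_i^L$, not of the form $\pm\lambda$ or $\pm i\lambda$, so the projective action has no fixed point or invariant two-point set. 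With conformality and the cohomological triviality of the Jacobian in hand, the paper never passes through periodic data: it proves directly that $h$ is Lipschitz along each $V_i^L$ (via the maps $h_n=f^{-n}\circ h_0\circ L^n$), upgrades the measurable derivative to a H\"older one by a cocycle rigidity theorem for conformal cocycles, and assembles regularity along the full unstable foliation by the inductive scheme of \cite{GKS} and Journ\'e's lemma.
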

\begin{remark}
In fact, we need $L$ to be irreducible and have no pairs of eigenvalues
of the form $\lambda, -\lambda$ or $i\lambda, -i\lambda$, where $\lambda$ is real.
This follows if  $L^4$ is irreducible.
\end{remark}

We note that the conjugacy of periodic data assumption in  \cite[Theorem 1.1]{GKS}
implies equality of Lyapunov exponents with respect to  the volume
by periodic approximation of Lyapunov exponents \cite[Theorem 1.4]{K11}.
Examples in~\cite{G}  show that irreducibility of $L$ is a necessary in
the results of~\cite{GKS}, and hence in Theorem \ref{main}, except when $L$ is conformal on
the full stable and unstable distributions.

\vskip.1cm
\begin{remark}
The following example demonstrates that irreducibility assumption is more crucial in the current setting when compared to periodic data rigidity. Let $A\colon\T^2\to\T^2$ be a hyperbolic automorphism. Then notice that the automorphism $L\colon (x,y)\mapsto (Ax, Ay)$ and a perturbation $L'\colon (x,y)\mapsto (Ax, Ay+\varphi(x))$ have the same volume Lyapunov spectrum. However, in general, $L$ and $L'$ are not $C^1$ conjugate, because $L'$ may have Jordan blocks in its periodic data. On the other hand, de la Llave established periodic data local rigidity for such $L$~\cite{L2}. 

\end{remark}

\vskip.1cm
\begin{remark}
The toral automorphisms satisfying the assumptions of Theorem~\ref{main} are generic in the following sense.
Consider the set of matrices in $SL(d,\Z)$ of norm at most $T$.
Then the proportion of matrices corresponding to automorphisms
that do not satisfy our assumptions goes to zero as $T \to \infty$.
Moreover, it can be estimated by $c\,T^{-\delta}$ for some $\delta >0$. 
\end{remark}
We refer to~\cite{GKS} for the proof of this remark. One just needs to consider the relation which ensures irreducibility of $L^4$ rather than $L$.

\vskip.3cm

Now we consider the case of a partially hyperbolic toral automorphism, that is an
automorphisms for which {\em some} of the eigenvalues have modulus different from one. 
We call a toral automorphism $L$ {\em totally irreducible} if $L^n$ is irreducible for every 
$n \in \N$. Such an $L$ has no root of unity as an eigenvalues and hence is ergodic and 
partially hyperbolic.
In fact,  \cite[Lemma A.9]{RH} shows that an automorphism $L$ is totally irreducible
if and only if  $L$ is ergodic with  irreducible (over the integers or rationals) characteristic 
polynomial $\chi_L(t)$ that can not be written as a polynomial of $t^n$ for any $n \ge 2$. 
Such an $L$ was called a {\em pseudo-Anosov automorphism} in \cite{RH}.
 
\vskip.1cm

We consider a  totally irreducible automorphism $L$ of $\T^d$
with two-dimensional center bundle, i.e. with exactly two eigenvalues on the unit circle.
It was shown by Rodriguez Hertz in  \cite{RH}  that such $L$ is stably ergodic,
more precisely, for large $N$ any sufficiently $C^{N}$-small volume-preserving  
perturbation of $L$ is also ergodic. 
The following theorem establishes Lyapunov spectrum rigidity for such toral 
automorphisms with simple real stable and unstable eigenvalues.

\begin{theorem} \label{partial}
Let $L:\T^d\to\T^d$ be a totally irreducible automorphism with exactly two eigenvalues 
of modulus one and simple real eigenvalues away from the unit circle.
Let  $f$ be a volume-preserving $C^{N}$-small perturbation of $L$ 
such that the Lyapunov exponents of $f$ with respect to the volume 
are the same as the Lyapunov exponents of $L$. 
If $d >4$ and $N=5$, or $d=4$ and $N= 22$, then $f$ is $C^{1+\text{H\"older}}$ conjugate to $L$.
If $d=4$ and $N=\infty$ then $f$ is $C^\infty$ conjugate to $L$.
\end{theorem}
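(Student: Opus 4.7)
The plan is to adapt the scheme behind Theorem~\ref{main}: treat the one-dimensional strong stable and unstable foliations with Livsic-type machinery, confront the two-dimensional center with a KAM (Herman-Yoccoz) linearization, and glue the partial regularity using Journ\'e's lemma.

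First, for $N$ sufficiently large, Rodriguez Hertz's pseudo-Anosov stability \cite{RH} supplies a H\"older conjugacy $h$ to $L$ in the homotopy class of the identity. The perturbation $f$ inherits a dominated splitting
\[
T\T^d=\bigoplus_i E^{s_i}\oplus E^c\oplus\bigoplus_j E^{u_j}
\]
where the strong subbundles are one-dimensional (coming from the simple real eigenvalues of $L$) and integrate to foliations $\F^{s_i},\F^{u_j}$ with $C^N$ leaves, while the two-dimensional center bundle $E^c$ is tangent to an $f$-invariant center foliation $\F^c$ via Hirsch-Pugh-Shub once $N$ is large enough. Equality of the volume Lyapunov exponents of $f$ and $L$, combined with Kalinin's periodic approximation of Lyapunov exponents \cite{K11}, forces the Birkhoff sum of $\log\|Df|_{E^{s_i}}\|$ over every periodic orbit to agree with that for $L$. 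Livsic then produces a H\"older coboundary turning this log-derivative into the constant $\log|\lambda_i|$, yielding a smooth non-stationary linearization of $f|_{\F^{s_i}}$ and, via comparison with $L$, uniform $C^{1+\text{H\"older}}$ regularity of $h$ along each $\F^{s_i}$; the analogous argument handles each $\F^{u_j}$.

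The principal obstacle is the center. Total irreducibility forces the two unit-modulus eigenvalues of $L$ to form a complex conjugate pair $e^{\pm 2\pi i\theta}$ with $\theta$ irrational, so $L|_{E^c}$ is a planar rotation and, as the argument of an algebraic integer on the unit circle that is not a root of unity, $\theta$ is Diophantine. Matching exponents yields vanishing center Lyapunov exponents for $f$, and Livsic together with volume preservation pins down the center Jacobian. Restricted to any center leaf, $f$ is then a $C^N$-small perturbation of the rotation by $2\pi\theta$, and a Herman-Yoccoz type theorem provides a smooth conjugacy to the rotation on each leaf, hence smoothness of $h$ along $\F^c$. The thresholds $N=5$ for $d>4$ and $N=22$ for $d=4$ reflect the derivative losses in Hirsch-Pugh-Shub and in the KAM step; the $d>4$ case is cheaper because the extra strong directions supply additional regularity through the subsequent gluing.

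Finally, iterated Journ\'e's lemma combines the smoothness of $h$ along the $\F^{s_i}$, $\F^c$ and $\F^{u_j}$ into a globally $C^{1+\text{H\"older}}$ conjugacy. For $d=4$ and $N=\infty$, bootstrapping the Livsic and Herman-Yoccoz steps to arbitrary finite regularity and repeatedly applying Journ\'e promotes $h$ to $C^\infty$. The hard part will be carrying out the center linearization with sharp enough derivative bookkeeping to reach the stated thresholds, since the standard KAM cost is much worse in dimension four than in higher dimensions, where the strong foliations absorb most of the regularity burden.
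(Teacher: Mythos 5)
Your overall architecture (one-dimensional strong foliations, a KAM-treated center, Journ\'e gluing) matches the paper's, but two of your key steps have genuine gaps. First, the topological conjugacy $h$ cannot be produced by ``pseudo-Anosov stability'' alone: structural stability fails for partially hyperbolic systems, and the paper explicitly notes that a small perturbation of $L$ need not be topologically conjugate to $L$. The conjugacy, together with its $C^{1+\text{H\"older}}$ regularity along the center, comes from the KAM argument of \cite[Section 6]{RH}, which applies only to \emph{non-accessible} perturbations. Non-accessibility must first be deduced from the equal-exponents hypothesis via the Avila--Viana invariance principle \cite[Theorem 8.1]{AV}: an accessible perturbation in this class would have distinct center exponents. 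This accessibility dichotomy is entirely absent from your argument, and without it your starting point $h$ does not exist. (Relatedly, the center leaves are planes densely immersed in $\T^d$, not circles, so there is no leafwise Herman--Yoccoz linearization to perform; the thresholds $N=5$ and $N=22$ are simply the regularity required by the KAM scheme of \cite{RH}.)

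Second, your treatment of the strong foliations is invalid. Equality of the Lyapunov exponents with respect to volume does not force the periodic data of $f$ to match that of $L$: Kalinin's theorem \cite{K11} approximates the volume exponents by exponents of periodic orbits, not the other way around, so the Birkhoff sums of $\log\|Df|_{E^{s_i}}\|$ over individual periodic orbits need not equal the corresponding sums for $L$, and the classical Liv\v{s}ic theorem cannot be invoked. Replacing periodic data by a measure-theoretic argument is precisely the point of this paper. The correct route, used both in Theorem \ref{main} and here, is: show $h_*m=\mu$ (both are the unique measure of maximal entropy); deduce from Ledrappier's entropy criterion (Lemma \ref{Ledrappier}) that the conditional measures of $\mu$ on the leaves of each one-dimensional $V_i^f$ are absolutely continuous with H\"older densities given by ratios of Jacobians; conclude that $h$ maps the (smooth) conditionals on $V_i^L$ to these, so that $h$ along $V_i^L$ is obtained by integrating a H\"older density, giving Proposition \ref{smooth along V}; and then run the same flag induction and Journ\'e gluing as in the Anosov case. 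Your proposal would need to be rebuilt around these two ingredients.
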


We note that, in contrast to the hyperbolic case, a small perturbation with different Lyapunov 
spectrum is not necessarily topologically conjugate to $L$.

The theorem above should be compared to~\cite[Theorem D]{SY}, where partially hyperbolic automorphisms with one dimensional center are treated. Such automorphisms leave invariant a fibration by circles. Consequently, one can consider the skew product perturbations which have the same volume Lyapunov exponents. In general, such perturbations are not $C^1$ conjugate the automorphism, in contrast to our Theorem~\ref{partial}. The reason for this is that automorphisms with one dimensional center are reducible, i.e. have eigenvalue $\pm 1$.


\section{Definitions, notations, and outline of the proof of Theorem \ref{main}}

\subsection{Definitions and  notations}
Since $f$ is $C^1$ close to an Anosov automorphism $L$, it is also {\it Anosov}. That is,
there exist a splitting 
of the tangent bundle of $\T^d$ into a direct sum of two $Df$-invariant 
continuous distributions $E^{s,f}$ and $E^{u,f}$,   a Riemannian 
metric on $\M$, and numbers $\nu$ and $\hat\nu$  such that 
\begin{equation}\label{Anosov def}
\|D_xf(v^s)\| < \nu < 1 < \hat\nu <\|D_xf(v^u)\|
\end{equation}
for any $x \in \M$ and unit vectors  $v^s\in E^{s,f}(x)$ and $v^u\in E^{u,f}(x)$.
The distributionss $E^{s,f}$ and $E^{u,f}$ are called {\it stable} and {\it unstable}. 
They are tangent to the stable and unstable foliations 
$W^{s,f}$ and $W^{u,f}$ respectively 
(see, e.g.
\cite{KH}). The leaves of these foliations are as smooth as $f$,
but in general the distributions $E^{s,f}$ and $E^{u,f}$  are only
H\"older continuous transversally to the corresponding foliations.
We denote by $E^{s,L}$ and $E^{u,L}$  the stable and unstable
distributions of $L$.
\vskip.2cm

Let $1<\rho_1 <\dots <\rho_\ell$ be the distinct moduli 
of the unstable eigenvalues of $L$ and let 
$E^{u,L}= E_1^L \oplus E_2^L \oplus \dots \oplus E_\ell^L$
be the corresponding splitting of the unstable distribution.
Since $f$ is $C^1$-close to $L$, the
unstable distribution $E^{u,f}$ splits into a direct sum of $\ell$
invariant H\"older continuous distributions close to the
corresponding ones for $L$:
 $$
   E^{u,f}= E_1^f \oplus E_2^f \oplus \dots \oplus E_\ell^f
$$
\cite[Section 3.3]{Pes}. Since no three eigenvalues of $L$ have the same modulus,
 each $E_i^L$, and hence $E_i^f$, is either one- or two-dimensional. 
The Lyapunov exponent of $L$ on $E_i^L$ is $\chi_i=\log \rho_i$. The assumption that 
the Lyapunov exponents of $f$ with respect to the invariant volume $\mu$ are the 
same as the Lyapunov exponents of $L$ means that for $\mu$-a.e. $x$, 
$$
\lim _{n\to \pm \infty} n^{-1} \log \| Df^n (v)\|= \chi_i=\log \rho_i \quad \text{for all } 0 \ne v \in E_i^f(x).
$$

We also consider the distributions
$E^f_{(i,j)}=E_i^f\oplus E_{i+1}^f\oplus\ldots\oplus E_j^f.$
For any $1 < k \le \ell$,  $\,E_{(k,\ell)}^f$
is a fast part of the unstable distribution and thus it integrates
to a H\"older foliation $W_{(k,\ell)}^f$ with smooth leaves
\cite[Section 3.3]{Pes}.

\vskip.1cm

{\bf Notation.}
 We say that an object is $C^{1+}$ if it is $C^1$ and its differential
  is H\"{o}lder continuous with some positive exponent.
We say that a homeomorphism $h$ is $C^{1+}$ along a foliation
$\mathcal F$ if the restrictions of $h$ to the leaves of $\mathcal
F$ is $C^{1+}$ and the derivative $Dh|_{\mathcal F}$ is H\"older
continuous on the manifold.
\vskip.1cm

For any $1 \le k < \ell$,  $\,E^f_{(1,k)}$ is a slow part of the unstable distribution. 
It also integrates to an $f$-invariant foliation $W_{(1,k)}^f$ with $C^{1+}$ smooth 
leaves. This can be seen by viewing $L$ as a partially hyperbolic map with the 
splitting $E^{s,L}\oplus E_{(1,k)}^L\oplus E_{(k+1,\ell)}^l$. Structural stability of 
partially hyperbolic systems \cite[Theorem 7.1]{HPS} implies that for a $C^1$-small 
perturbation $f$ the ``central" foliation survives; that is, $E^f_{(1,k)}$ integrates to
a foliation $W_{(1,k)}^f$. See \cite[Lemma~6.1]{G} for an alternative proof in our 
setup that also gives unique integrability.

Since both weak and strong flags are uniquely integrable and the leaves of the
corresponding foliations are at least $C^{1+}$, for any $1 \le k \le \ell\,$ the
distribution $E_k^f=E_{(1,k)}^f\cap E_{(k,\ell)}^f$ also integrates uniquely
to a H\"older foliation
$$V_k^f=W_{(1,k)}^f\cap W_{(k,\ell)}^f$$
with $C^{1+}$ smooth leaves. We use analogous notation for the automorphism $L$:
$E_{(i,j)}^L=E_i^L\oplus\ldots\oplus E_j^L, $ and $W_{(i,j)}^L$ and
$V_i^L$ are the linear foliations tangent to $E_{(i,j)}^L$ and
$E_i^L$ respectively. \vskip.2cm

\vskip.2cm

\subsection{Outline of the proof of Theorem \ref{main}}
By the Structural Stability Theorem, there exists a unique
bi-H\"older continuous homeomorphism $h$ of $\T^d$ close to the
identity in $C^0$ topology such that
  $$
    h \circ L = f \circ h.
  $$
First, for any sufficiently $C^1$-small perturbation of an Anosov automorphism 
of $\T^d$, the conjugacy $h$ takes the flag of weak foliations  for $L$ into
the corresponding  weak flag for $f$:

\begin{lemma} [cf. Lemma~6.3 in~\cite{G} and Lemma 2.1 in \cite{GKS}]
\label{weak_flag_pres} $\;$\\
For any $1\le k\le \ell,$ $\,h(W_{(1,k)}^L )=W_{(1,k)}^f$.
\end{lemma}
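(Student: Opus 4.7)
The plan is to use the classical fact that $h$ carries unstable leaves of $L$ onto unstable leaves of $f$, and then to identify the weak sub-foliation $W_{(1,k)}^L$ inside $W^{u,L}$ by a dynamical growth-rate condition that is preserved under the bi-H\"older conjugacy $h$. The case $k = \ell$ is exactly $h(W^{u,L}) = W^{u,f}$, which is standard and follows from uniqueness of unstable manifolds together with the relation $h \circ L = f \circ h$. Fix $k$ with $1 \le k < \ell$. Since $f$ is $C^1$-close to $L$, the conjugacy $h$ is bi-H\"older, and by shrinking the $C^1$-distance we may assume its H\"older exponent $\alpha$ satisfies $\rho_k^{1/\alpha} < \rho_{k+1}$, which will be used in the reverse inclusion below.

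Next I would establish a dynamical characterization of the weak leaf. Using the splitting $E^{u,L} = E_{(1,k)}^L \oplus E_{(k+1,\ell)}^L$ together with the rate bounds $\|L^n v\| \le C\rho_k^n \|v\|$ for $v \in E_{(1,k)}^L$ and $\|L^n v\| \ge c\rho_{k+1}^n \|v\|$ for $v \in E_{(k+1,\ell)}^L$, one shows that for $y \in W^{u,L}(x)$,
\[
y \in W_{(1,k)}^L(x) \;\iff\; \limsup_{n\to\infty}\, n^{-1}\log d(L^n x, L^n y) \le \log \rho_k,
\]
the distance being measured along the unstable leaf. The analogous characterization for $f$ inside $W^{u,f}$ uses the invariant splitting $E^{u,f} = E_{(1,k)}^f \oplus E_{(k+1,\ell)}^f$ recalled earlier and the fact that the corresponding rate bounds remain close to those of $L$.

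With these characterizations in hand, the forward inclusion $h(W_{(1,k)}^L(x)) \subseteq W_{(1,k)}^f(h(x))$ is a H\"older estimate: for $y \in W_{(1,k)}^L(x)$,
\[
d(f^n h(x), f^n h(y)) = d(h(L^n x), h(L^n y)) \le C\, d(L^n x, L^n y)^\alpha \le C' \rho_k^{n\alpha} < C' \rho_{k+1}^n,
\]
so by the $f$-characterization $h(y) \in W_{(1,k)}^f(h(x))$. The reverse inclusion is obtained by running the same computation with $h^{-1}$: if $h(y) \in W_{(1,k)}^f(h(x))$ then $d(f^n h(x), f^n h(y)) \le C\rho_k^n$, and bi-H\"older continuity of $h^{-1}$ yields $d(L^n x, L^n y) \le C\rho_k^{n/\alpha} < C\rho_{k+1}^n$ by the choice of $\alpha$, placing $y \in W_{(1,k)}^L(x)$.

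The main technical point I expect to require care is the dynamical characterization of $W_{(1,k)}^f$ itself: it requires uniform rate bounds on arbitrary unstable-leaf displacements rather than on pure Lyapunov vectors, which in turn rely on H\"older continuity of the subbundles $E_i^f$ and the $C^{1+}$ regularity of the $W^{u,f}$-leaves recalled in the preceding discussion. Once this is in place, the remainder of the argument is a formal application of the H\"older inequality.
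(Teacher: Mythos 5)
The paper does not actually prove this lemma --- it is imported from \cite{G} (Lemma 6.3) and \cite{GKS} (Lemma 2.1) --- and the argument there has the same outline as yours: identify $W_{(1,k)}^L$ and $W_{(1,k)}^f$ inside the unstable leaves by the exponential rate of forward separation, and check that $h$ preserves that rate. Your strategy is therefore the right one, and you correctly isolate the real technical content, namely the rate characterization of $W_{(1,k)}^f$, which rests on the integrability of $E_{(1,k)}^f$ and the local product structure of $W_{(1,k)}^f$ and $W_{(k+1,\ell)}^f$ inside $W^{u,f}$.

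However, the step where you push the rate through $h$ does not work as written. H\"older continuity of $h$ is a small-scale estimate, whereas the distances $d(L^nx,L^ny)$ in your characterization are leafwise (equivalently, lifted to $\R^d$) distances tending to infinity; the inequality $d(h(a),h(b))\le C\,d(a,b)^\alpha$ is false for $d(a,b)$ large, since it would force $h$ to be sublinear at infinity while the lift $\tilde h$ differs from the identity by a bounded amount. Similarly, in the reverse inclusion the bound $d(L^nx,L^ny)\le C\,d(f^nh(x),f^nh(y))^{1/\alpha}$ does not follow from H\"older continuity of $h$ or of $h^{-1}$ (the correct application of H\"older continuity of $h^{-1}$ would give exponent $\alpha'<1$, not $1/\alpha>1$). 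The standard fix is simpler than what you propose: work in the universal cover, where $\tilde h=\mathrm{id}+u$ with $u$ continuous and $\Z^d$-periodic, hence bounded. Then
$$
\bigl|\,\|\tilde f^n\tilde h(\tilde x)-\tilde f^n\tilde h(\tilde y)\|-\|\tilde L^n\tilde x-\tilde L^n\tilde y\|\,\bigr|\le 2\|u\|_\infty ,
$$
so the two exponential separation rates coincide exactly. This makes both inclusions symmetric and removes the need for the auxiliary hypothesis $\rho_k^{1/\alpha}<\rho_{k+1}$.
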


Theorem~\ref{main} is proved by showing inductively that $h$ is $C^{1+}$ 
along $W_{(1,k)}^L$ for any $k$ and thus along $W_{(1,\ell)}^L=W^u(L)$. 
By the same argument, $h$ is $C^{1+}$ along $W^s(L)$ and hence $h$ 
is $C^{1+}$ by Journ\'e Lemma:
\begin{lemma}[Journ\'e~\cite{J}]  \label{Journe}
Let $\M_j$ be a manifold and $\mathcal F_j^s$, $\mathcal F_j^u$ be
continuous transverse foliations on $\M_j$ with uniformly smooth
leaves, $j=1, 2$. Suppose that $h:\M_1\to \M_2$ is a homeomorphism
that maps $\mathcal F_1^s$  into $\mathcal F_2^s$ and $\mathcal
F_1^u$ into $\mathcal F_2^u$. Moreover, assume that the restrictions
of $h$ to the leaves of these foliations are uniformly $C^{r+\alpha}$,
$r\in \mathbb N$, $0<\alpha<1$. Then $h$ is $C^{r+\alpha}$.
\end{lemma}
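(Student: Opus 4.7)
The statement is Journé's Lemma, a classical regularity result. Since the conclusion and hypotheses are local, my plan is to work in charts, reduce to a question about a map $h : U \to V$ between open sets of $\R^n$, and appeal to a Campanato-type characterization of the Hölder space $C^{r+\alpha}$: a continuous $h$ is $C^{r+\alpha}$ iff at every $p$ there is a polynomial $P_p$ of degree at most $r$ with $|h(q)-P_p(q)| \le C\,|q-p|^{r+\alpha}$ and $C$ independent of $p,q$. So the task reduces to constructing such $P_p$ from the leafwise data.

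First I would pass to local coordinates on $\M_1$ that straighten the stable foliation $\mathcal F_1^s$ into a linear coordinate foliation; $\mathcal F_1^u$ then becomes a continuous transverse foliation whose leaves are uniformly smooth (in general curved) graphs. An analogous straightening is performed on $\M_2$. By hypothesis the restrictions of $h$ to leaves of $\mathcal F_1^s$ and $\mathcal F_1^u$ are uniformly $C^{r+\alpha}$, so at each $p$ we have two leafwise Taylor polynomials $T_p^s$ and $T_p^u$ of degree $r$, each with remainder of order $|\cdot - p|^{r+\alpha}$ on its own leaf, uniformly in $p$.

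To produce $P_p$ I would use a zigzag argument. Given $q$ near $p$, let $m$ be the intersection of the $\mathcal F_1^s$-leaf through $p$ with the $\mathcal F_1^u$-leaf through $q$ (this intersection exists and depends continuously on $p,q$ by transversality). Along the first leg the Taylor polynomial $T_p^s$ controls $h$ to order $|p-m|^{r+\alpha}$, and along the second leg $T_m^u$ controls $h$ to order $|m-q|^{r+\alpha}$. Composing these, i.e.\ substituting the coordinates of the first leg into the polynomial along the second leg, gives a polynomial of degree at most $r$ in the coordinates of $q$, which I would then take to be $P_p(q)$. Since $|p-m|$ and $|m-q|$ are each comparable to $|p-q|$ up to a constant depending only on the angle between the foliations, the total remainder is of order $|p-q|^{r+\alpha}$, uniformly.

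The main obstacle, and the real content of Journé's theorem, is that the coefficients of $T_m^u$ as a polynomial on the $\mathcal F_1^u$-leaf of $m$ depend on $m$, and the regularity of that dependence on $m$ (moving transverse to $\mathcal F_1^u$) is exactly the transverse regularity one is trying to establish. The standard resolution is an induction on $r$. The base case $r=0$ is direct: a homeomorphism uniformly $\alpha$-Hölder along two transverse foliations is $\alpha$-Hölder globally, by the two-leg triangle inequality above. For the inductive step, one first shows that $Dh$ exists on $U$ and respects the splitting $T\mathcal F_1^s \oplus T\mathcal F_1^u$, using the zigzag formula to build it from $T_p^s$ and $T_p^u$; then one applies the inductive hypothesis at regularity $r-1+\alpha$ to the leafwise derivatives $Dh|_{\mathcal F_1^s}$ and $Dh|_{\mathcal F_1^u}$, which are themselves $C^{r-1+\alpha}$ along their respective foliations. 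The resulting transverse control on the Taylor coefficients closes the induction, the Campanato criterion applies, and $h \in C^{r+\alpha}$.
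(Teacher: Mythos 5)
The paper does not prove this lemma; it is quoted from Journ\'e's paper \cite{J}, so your proposal has to stand on its own, and it has a genuine gap: the inductive step is circular. You correctly isolate the obstacle --- the coefficients of the leafwise Taylor polynomial $T_m^u$ depend on the foot point $m$, and the regularity of that dependence in the transverse direction is exactly what is to be proved --- but the resolution you offer does not overcome it. To apply the inductive hypothesis (the lemma at regularity $r-1+\alpha$) to the leafwise derivative $Dh|_{\mathcal F_1^s}$, you must know that this function is uniformly $C^{r-1+\alpha}$ along the leaves of \emph{both} foliations. The hypothesis on $h$ gives this only along $\mathcal F_1^s$; along $\mathcal F_1^u$ you have no information about $Dh|_{\mathcal F_1^s}$ --- the hypotheses do not even guarantee it is continuous transversally. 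The same problem already defeats the case $r=1$: $Dh$ restricted to $T\mathcal F_1^s$ is $\alpha$-H\"older along $s$-leaves, but nothing controls its modulus of continuity along $u$-leaves, so the ``inductive hypothesis at level $\alpha$'' cannot be invoked. Relatedly, your $P_p(q)=T^u_{m(q)}(q)$ is not a polynomial in $q$, precisely because the coefficients vary with $m(q)$, so the Campanato criterion is never actually verified by your construction. (Your base case --- global H\"older continuity from uniform leafwise H\"older continuity via the two-leg estimate --- is fine.)

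Journ\'e's actual argument avoids leafwise Taylor expansions and the induction on $r$ altogether. He takes a grid of points obtained by intersecting finitely many $s$-leaves with finitely many $u$-leaves at a sequence of geometrically decreasing scales $\rho^n$, interpolates the values of $h$ at these points by a genuine two-variable polynomial of degree at most $r$ (Lagrange interpolation), and bounds the interpolation error using only the one-dimensional leafwise estimates. The key technical input is a one-variable lemma: a function that at every point admits, for each scale $\rho^n$, a degree-$r$ polynomial approximation with error $O(\rho^{n(r+\alpha)})$ is $C^{r+\alpha}$. This is how the transverse regularity of the Taylor coefficients is \emph{produced} rather than assumed. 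To repair your write-up you would need to replace the zigzag-plus-induction scheme by such an interpolation argument, or simply cite \cite{J} as the paper does.
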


The inductive step is given by the following proposition \cite[Proposition 2.4]{GKS}:

\begin{proposition}\label{fast_to_fast}
Suppose that  $h(V_i^L)=V_i^f$, $1\le i\le k-1$, and $h$ is a
$C^{1+}$ diffeomorphism along $W^L_{(1,k-1)}$. Then $h(V_k^L)=V_k^f$
and  $h$ is a $C^{1+}$ diffeomorphism along $W_{(1,k)}^L$.
\end{proposition}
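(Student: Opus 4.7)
The plan proceeds in three stages: (i) verify $h(V_k^L)=V_k^f$ using flag preservation; (ii) reduce the $C^{1+}$ regularity of $h$ along $W^L_{(1,k)}$ to regularity along the single foliation $V_k^L$ via Journ\'e's lemma; and (iii) establish the latter by cocycle rigidity driven by the equality of Lyapunov exponents. For stage (i), the foliation $V_k^L$ can be characterized intrinsically inside $W^L_{(1,k)}$ as the ``new'' direction complementary to $W^L_{(1,k-1)}$. Since $h$ sends $W^L_{(1,k)}$ to $W^f_{(1,k)}$ by Lemma \ref{weak_flag_pres}, and is assumed to map $V_i^L$ to $V_i^f$ for $i<k$ and to be $C^{1+}$ along $W^L_{(1,k-1)}$, this characterization is preserved and yields $h(V_k^L)=V_k^f$.

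For stage (ii), each leaf of $W^L_{(1,k)}$ is subfoliated by two transverse $C^{1+}$ foliations: the slow one, $W^L_{(1,k-1)}$, along which $h$ is already $C^{1+}$ by the inductive hypothesis, and the new direction $V_k^L$. Journ\'e's Lemma \ref{Journe}, applied inside each leaf of $W^L_{(1,k)}$, reduces the desired conclusion to showing that $h$ is $C^{1+}$ along $V_k^L$.

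Stage (iii) splits according to the dimension of $E_k^L$, which is $1$ or $2$. When $\dim E_k^L=1$, the derivative $Df|_{E_k^f}$ is a scalar cocycle whose Lyapunov exponent with respect to the volume equals $\chi_k=\log\rho_k$. Using ergodicity of $f$, a Liv\v{s}ic-type argument on the log-norm of this cocycle would reduce it to the constant $\chi_k$ up to a H\"older coboundary, and a de~la~Llave type smoothing argument (compare \cite{L0, L1}) then yields $C^{1+}$ regularity of $h$ along $V_k^L$ leaves. When $\dim E_k^L=2$, the corresponding eigenvalues of $L$ form a complex conjugate pair of modulus $\rho_k$, so $L|_{E_k^L}$ is conformal in a suitable linear metric. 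The equality of both Lyapunov exponents of $f$ on $E_k^f$ with $\chi_k$, combined with $C^1$-proximity to $L$ and the irreducibility of $L^4$, should force $Df|_{E_k^f}$ to be measurably, hence H\"older-continuously, conformal, via cocycle rigidity techniques of Kalinin--Sadovskaya type. Once conformality is established, $h|_{V_k^L(x)}$ is a quasi-conformal conjugacy between two $C^{1+}$ conformal expanding dynamical systems on $2$-manifolds, and a Tukia / Mostow--Sullivan type rigidity theorem promotes it to a $C^{1+}$ diffeomorphism.

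The main obstacle is the two-dimensional case in stage (iii): promoting the almost-everywhere equality of Lyapunov exponents to a pointwise, H\"older-continuous conformal structure on $E_k^f$ is the crucial step, and this is precisely where the strengthened assumption that $L^4$ (rather than just $L$) be irreducible is used, namely to rule out an invariant measurable line field inside $E_k^f$ inherited from a reducible power of $L$, the obstruction noted in the remark following Theorem \ref{main}. The one-dimensional case, by contrast, is essentially a classical Liv\v{s}ic / de~la~Llave argument once the inductive regularity along $W^L_{(1,k-1)}$ is in hand.
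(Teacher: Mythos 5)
Your stages (i) and (ii) track the paper's structure: the claim $h(V_k^L)=V_k^f$ is indeed taken over unchanged from \cite{GKS} (though your ``intrinsic characterization \dots\ as the new direction complementary to $W^L_{(1,k-1)}$'' is not actually a characterization --- a leaf of $W^L_{(1,k)}$ has many transversals to $W^L_{(1,k-1)}$, and what must be shown is that $h$ sends the specific one cut out by the fast foliation $W^L_{(k,\ell)}$ to its counterpart for $f$; this requires the dynamical argument of \cite{GKS}), and the reduction via Journ\'e's lemma to $C^{1+}$ regularity along $V_k^L$ is exactly the paper's Proposition~\ref{smooth along V}. Your two-dimensional conformality step also matches the paper in spirit: continuous amenable reduction for a fiber bunched cocycle with one exponent, with the irreducibility of $L^4$ used to exclude an invariant line field or pair of line fields.

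The genuine gap is in passing from conformality (or, in the one-dimensional case, from equality of exponents) to regularity of $h$ along $V_k^L$. In the one-dimensional case you invoke ``a Liv\v{s}ic-type argument'' to make $\log\|Df|_{E^f_k}\|-\chi_k$ a H\"older coboundary, but there is no periodic data in this theorem, and almost-everywhere convergence of Birkhoff averages to $\chi_k$ does not by itself produce a transfer function. This is precisely the step the paper has to work for: it first shows $h_*(m)=\mu$ (both are the measure of maximal entropy, by the Pesin formula and the exponent hypothesis), deduces from Ledrappier's entropy criterion that the conditional measures of $\mu$ on $V^f_k$-leaves are absolutely continuous, reads off from their density $\rho$ a \emph{measurable} solution of $\det L|_{V^L_k}=J(x)\,\rho(fx)\,\rho(x)^{-1}$, and only then applies the measurable Liv\v{s}ic theorem to make $\rho$ H\"older. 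In the two-dimensional case you omit this step entirely and instead appeal to Tukia / Mostow--Sullivan rigidity for a ``quasiconformal conjugacy of conformal expanding systems''; but $h$ restricted to a $V_k^L$-leaf is a priori only H\"older (not quasiconformal), the relevant object is a cocycle over the base dynamics rather than an expanding self-map of a leaf, and conformality alone gives no control of the dilation $a(x)=(\text{Jac}\, f|_{V^f_k})^{1/2}$ along orbits. The paper instead uses the coboundary identity for $a(x)$ to bound $\|L^n|_{E^L_k}\|\cdot\|(F^n_x)^{-1}\|$ uniformly, concludes that $h=\lim f^{-n}\circ h_0\circ L^n$ is bi-Lipschitz along $V^L_k$, and finally upgrades the a.e.-defined derivative --- a measurable transfer function between two conformal cocycles --- to a H\"older one by \cite[Theorem 2.7]{S15}. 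Without the Jacobian-cohomologous-to-constant step, your outline cannot produce even a Lipschitz bound on $h$ along $V_k^L$.
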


The first part of the proof is to establish that $h(V_k^L)=V_k^f$. This part
is identical to the proof in \cite{GKS}. The second part, that $h$ is a $C^{1+}$ 
diffeomorphism along $W_{(1,k)}^L$, follows from Journ\'e Lemma and the
following proposition:

\begin{proposition} \label{smooth along V}
If $\,h(V^L_i)=V^f_i$, then
 $h$ is a $C^{1+}$ diffeomorphism along $V_i^L$.
\end{proposition}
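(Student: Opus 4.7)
The plan is to construct non-stationary linearizations along the leaves of $V_i^f$, transport the conjugacy $h$ into these coordinates, and use the resulting scaling relation to force $h$ to be affine on each leaf. The argument treats the cases $\dim V_i^L = 1$ and $\dim V_i^L = 2$ in parallel; in the 2D case the $L^4$-irreducibility hypothesis ensures that $L|_{E_i^L}$ is a genuine rotation-dilation with two complex conjugate eigenvalues $\rho_i e^{\pm i\theta_i}$, $\theta_i\notin\{0,\pi/2,\pi\}$, and in particular has simple spectrum.

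The first step is to upgrade the equality of volume Lyapunov exponents to a H\"older cohomological trivialization: produce a H\"older continuous transfer $C:\T^d\to GL(E_i^L)$ with
\[
C(fx)\cdot Df|_{E_i^f(x)}= L|_{E_i^L}\cdot C(x),
\]
after identifying $E_i^f$ with $E_i^L$ via a H\"older frame. In the scalar case this follows from the Livshits theorem once one shows, using Kalinin's periodic approximation theorem \cite{K11} together with ergodicity of volume and the constancy of the Lyapunov exponent within a 1D bundle, that the exponent of $Df|_{E_i^f}$ equals $\chi_i$ at every periodic point of $f$. In the 2D case one invokes the Kalinin--Sadovskaya Livshits theorem for matrix cocycles, combined with an argument that total irreducibility plus matching Lyapunov exponents excludes non-conformal conjugacy classes at periodic orbits.

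The second step, given the transfer $C$, is to build a H\"older family of $C^{1+}$ non-stationary linearizations $\Phi_x:V_i^f(x)\to E_i^L$ with $\Phi_x(x)=0$ and $\Phi_{fx}\circ f= L|_{E_i^L}\circ \Phi_x$, using the $C^{1+}$ smoothness of the leaves of $V_i^f$. For $L$ let $\Phi^L_x:V_i^L(x)\to E_i^L$ be the canonical identification. Setting $\tilde h_x=\Phi_{h(x)}\circ h\circ (\Phi^L_x)^{-1}$, the conjugacy relation becomes $\tilde h_{Lx}\circ L|_{E_i^L}= L|_{E_i^L}\circ \tilde h_x$. Iterating $N$ times backwards and using $\rho_i>1$ so that $(L|_{E_i^L})^{-N}v\to 0$, the uniform $C^{1+}$ behaviour of $\Phi_y$ near $y$ together with the $L$-invariance of $D_0\tilde h_y$ (and hence, by ergodicity and H\"older dependence, its constancy) forces $\tilde h_x$ to equal a single linear map $A\in GL(E_i^L)$ commuting with $L|_{E_i^L}$. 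Transporting back through $\Phi$ and $\Phi^L$ yields that $h$ is $C^{1+}$ along $V_i^L$.

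The main obstacle I foresee is the first step in the 2D case. Matrix cocycles are considerably less rigid than scalar ones: equality of volume Lyapunov exponents alone does not force the derivative cocycle to be cohomologous to the constant $L|_{E_i^L}$ via a H\"older transfer, because conjugacy classes can a priori differ at periodic orbits. Overcoming this requires combining the Kalinin--Sadovskaya matrix Livshits theorem with the $L^4$-irreducibility hypothesis to rule out invariant subbundles and to force conformal periodic data, and this is the technical heart of the argument. The second step, while conceptually standard, also requires uniform control of the non-stationary linearizations along the non-compact leaves of $V_i^f$.
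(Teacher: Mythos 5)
Your step one is where the argument breaks, and it is not merely a technical obstacle to be smoothed over later: deducing a H\"older cohomological trivialization of $Df|_{E_i^f}$ from the equality of \emph{volume} Lyapunov exponents is precisely the difficulty that separates this theorem from the periodic-data rigidity of \cite{GKS}. Kalinin's periodic approximation theorem \cite{K11} goes in the wrong direction for you: it approximates the exponents of the cocycle with respect to the volume by exponents along \emph{some} sequence of periodic orbits, but it does not show that the exponent at \emph{every} periodic point equals $\chi_i$, which is what the scalar Livshits theorem needs; and in the two-dimensional case no mechanism is offered for ``excluding non-conformal conjugacy classes at periodic orbits'' --- the matrix Livshits theorem takes vanishing periodic obstructions as its hypothesis, it does not produce them. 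The paper avoids periodic data entirely. Conformality of $F=Df|_{E_i^f}$ with respect to a H\"older Riemannian norm is obtained from continuous amenable reduction for fiber-bunched cocycles with a single Lyapunov exponent (via \cite{KS13}), with the alternative conclusions (an invariant line field, or an invariant pair of transverse line fields) ruled out at a single fixed point of $f$ because $L|_{E_i^L}$ has complex conjugate eigenvalues distinct from $\pm i\lambda$, so its projective action has no fixed point or invariant two-point set. The scaling factor is then shown to be cohomologous to the constant $\det L|_{E_i^L}$ by a measure-rigidity argument: $h_*m=\mu$ since both are the unique measure of maximal entropy (here the exponent hypothesis enters through the Pesin formula), Ledrappier's criterion gives absolute continuity of the conditional measures of $\mu$ on the leaves of $V_i^f$, the resulting density $\rho$ is a \emph{measurable} transfer function for $\log\operatorname{Jac}(f|_{V_i^f})-\log\det L|_{V_i^L}$, and the measurable Livshits theorem upgrades it to a H\"older one. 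None of this has an analogue in your outline.

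Your second step also has a circularity: you invoke $D_0\tilde h_y$ and its ``$L$-invariance,'' but $\tilde h_y$ is just $h$ read in linearizing charts and $h$ is a priori only H\"older, so this derivative does not exist at the point where you need it. The paper instead first proves $h$ is bi-Lipschitz along the leaves by writing $h=\lim_n f^{-n}\circ h_0\circ L^n$ and bounding the leafwise derivatives uniformly, using exactly the conformality and the cohomological triviality of the Jacobian established before; only then does it differentiate the conjugacy a.e.\ and apply the regularity theorem for measurable transfer functions between \emph{conformal} cocycles \cite[Theorem 2.7]{S15} to get H\"older continuity of $D_{V_i^L}h$. If you want to salvage your linearization scheme, you would still need both inputs from the paper's first two subsections, at which point you have essentially reproduced its proof.
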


Since $V_1^L=W_{(1,1)}^L$, Lemma~\ref{weak_flag_pres} implies that 
$h(V_1^L)=V_1^f$, and then Proposition~\ref{smooth along V} yields  
that $h$ is $C^{1+}$ along $V_1^L$. This provides the base of the induction. 
Thus to prove Theorem \ref{main} it remains to establish Proposition~\ref{smooth along V},
which is done in Section~\ref{smooth along V proof}. 

\vskip.4cm

\section{Proof of Proposition~\ref{smooth along V}}
\label{smooth along V proof}
In this section we  write
$$
  \VL \overset{\text{def}}{=}V_i^L,  \quad
  \Vf\overset{\text{def}}{=}V_i^f, \quad
  \EL\overset{\text{def}}{=}E_i^L=T\VL, \quad
  \Ef\overset{\text{def}}{=}E_i^f=T\Vf.
$$
The key ingredients of the proof are establishing conformality of $f$ along $\Vf$
and showing that the Jacobian  of $f$ along $\Vf$ is cohomologous to a constant.
They require arguments different from those in \cite{GKS}, where periodic data
was used in an essential way.
After this we show that $h$ is Lipschitz along $\VL$ as a limit of smooth
maps with uniformly bounded derivatives. Then we prove that the
measurable derivative of $h$ along $\VL$ is actually H\"older
continuous. We consider the case when $\Vf$ is two-dimensional, as
in the one-dimensional case the conformality is trivial and the other
arguments are simpler.

\subsection{Conformality of $Df|_{\Ef}$.}
Since the linear map $L$ is irreducible, it is diagonalizable over $\C$. Therefore, 
as the eigenvalues of $L|_{\EL}$ have the same modulus, $L|_{\EL}$ is conformal 
with respect to some norm on $\EL$. 

We view the restriction $F=Df|_{\Ef}$
as a linear cocycle over $f$, that is an automorphism of the vector bundle $\E^f$ which 
covers $f$. 
We will now show that $F$  is conformal with
respect to some H\"older  continuous Riemannian norm $\|\cdot\|^f$ on $\E^f$,
 that is the linear map $F_x=Df|_{\Ef(x)} : \E_x^f \to \E_{fx}^f$ is conformal for every $x$.
Since the bundle $\E^f$ is H\"older continuous, 
the cocycle $F$ is $\beta$-H\"older for  some $\beta>0$.
 Also, since  $f$ is close to $L$, the cocycle $F$ is close to 
conformal and therefore {\it fiber bunched}, that is, for all $x\in\T^d$,
$$
\|F_x\|\cdot \|F_x^{-1}\|\cdot \nu^\beta < 1 \;\text{ and }\;
\|F_x\|\cdot \|F_x^{-1}\|\cdot  \hat \nu^\beta < 1, 
$$
where $\nu, \hat\nu$ are as in \eqref{Anosov def}. Also, by the assumption, $F$ has only 
one Lyapunov exponent with respect 
to the volume $\mu$.
Thus we can apply the following proposition.

\begin{proposition}
[Two-dimensional Continuous  Amenable Reduction] \label{reduction} $\;$\\
Let $f$ be a $C^{1+\text{H\"older}}$ Anosov diffeomorphism of a compact manifold $\M$
preserving a volume $\mu$.
Let $\E$ be a vector bundle over $\M$ with two-dimensional fibers, and 
let $F:\E\to \E$ be a H\"older continuous  fiber bunched cocycle over $f$ with one 
Lyapunov exponent with respect to $\mu$. 
Then at least one of the following holds:
 \begin{enumerate}
\item $F$ is conformal with
respect to a H\"older  continuous Riemannian norm on $\E$;
 
\item  $F$ preserves a H\"older continuous  one dimensional sub-bundle;
 
\item $F$ preserves a H\"older continuous field of two transverse lines.
 \end{enumerate}
  \end{proposition}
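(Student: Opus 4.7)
The plan is to pass to the projective cocycle $PF\colon P\E\to P\E$ over $f$, where $P\E$ is the projective bundle with fibers $\mathbb{RP}^1$, and to classify $PF$-invariant structures according to the support of a fiberwise invariant probability measure. First I would construct stable and unstable holonomies $H^{s/u}_{x,y}\colon \E_x\to\E_y$ for $y\in W^{s/u,f}(x)$ by the standard fiber-bunching construction; these are H\"older continuous on bounded pieces of leaves and satisfy $F_y\circ H^{s/u}_{x,y}=H^{s/u}_{fx,fy}\circ F_x$. Next I would choose an ergodic $PF$-invariant Borel probability measure $m$ on $P\E$ with $\pi_\ast m=\mu$: the set of such lifts is nonempty, convex and weak-$\ast$ compact, so ergodic lifts exist. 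Disintegrating $m=\int m_x\,d\mu(x)$ along $\pi$ yields conditional measures $m_x$ on $\mathbb{P}(\E_x)$ satisfying $(PF_x)_\ast m_x=m_{fx}$ for $\mu$-a.e.\ $x$.

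The key technical step is to upgrade $\{m_x\}$ to a genuinely H\"older continuous family defined on all of $\M$. This is a measure-valued analog of the H\"older invariant-section theorems for fiber bunched cocycles: the pushforwards $y\mapsto(PH^{s/u}_{x,y})_\ast m_x$ define holonomy-invariant families along stable and unstable leaves, and using the $F$-equivariance of $H^{s/u}$, ergodicity of $f$, and the local product structure on $\M$, these families agree $\mu$-a.e.\ with the disintegration and extend by holonomy to a H\"older continuous family $\{m_x\}_{x\in\M}$ with $(PF_x)_\ast m_x=m_{fx}$ for every $x$.

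With continuity of $x\mapsto m_x$ in place, let $n(x)=\#\,\mathrm{supp}(m_x)\in\{1,2,\ldots,\infty\}$. Since $PF_x$ is a projective isomorphism between fibers, $n$ is $f$-invariant, hence constant on $\M$ by ergodicity and continuity. If $n=1$, the unique support point gives a H\"older $F$-invariant line field, which is conclusion (2). If $n=2$, the two support points yield a H\"older field of two transverse lines (distinct points in $\mathbb{P}(\E_x)$ correspond to transverse lines), which is conclusion (3). If $n\geq 3$, then for each $x$ the stabilizer of $m_x$ in $PGL(\E_x)$ is compact; the $PGL(2,\R)$-equivariant Douady--Earle barycenter assigns to $m_x$ a point $\sigma_x\in\mathbb{H}^2$, which one interprets as a conformal structure on the $2$-plane $\E_x$. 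Equivariance and $(PF_x)_\ast m_x=m_{fx}$ force $F_x$ to map $\sigma_x$ to $\sigma_{fx}$, so $F$ is conformal with respect to the H\"older field $\sigma$; choosing any H\"older representative Riemannian norm in the class $\sigma$ (normalized via a smooth background volume on $\E^f$) gives conclusion (1).

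I expect the main obstacle to be the upgrade of the a.e.-defined disintegration to a globally H\"older continuous invariant family: this demands a careful interplay of fiber bunching, holonomy invariance, and local product structure, paralleling the invariant-sub-bundle arguments but in the setting of projective measures. Once that regularity is secured, the trichotomy follows cleanly from the structure of $PGL(2,\R)$-orbits on probability measures on $\mathbb{RP}^1$ and the equivariance of the barycenter map.
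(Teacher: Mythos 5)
Your strategy is genuinely different from the paper's. The paper does not reprove the classification from scratch: it quotes the Continuous Amenable Reduction of Kalinin--Sadovskaya \cite[Theorem 3.4, Corollary 3.8]{KS13}, which for two-dimensional fibers gives either (1), (2), or a finite cover preserving $j\ge 2$ distinct H\"older line sub-bundles; the only new work is showing that $j>2$ forces conformality, via a cohomological argument (two diagonal entries with a third invariant line are cohomologous by \cite[Lemma 7.1]{S13}, hence the cocycle is uniformly quasiconformal, hence conformal by \cite[Corollary 2.5]{KS10}). You instead propose to rederive the trichotomy directly from an invariance-principle argument on the projective bundle. That is a legitimate route --- it is essentially the proof strategy underlying the cited black boxes --- but as written it has a genuine gap at its central step.

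The gap: the hypothesis that $F$ has only \emph{one} Lyapunov exponent is never used in your argument, yet it is exactly what makes the key step true. You assert that the disintegration $\{m_x\}$ of an ergodic lift is invariant under the stable and unstable holonomies, attributing this to ``$F$-equivariance of $H^{s/u}$, ergodicity of $f$, and the local product structure.'' Those ingredients alone do not yield holonomy invariance. If the two Lyapunov exponents of $F$ were distinct, the natural ergodic lifts would be supported on the Oseledets line fields, whose disintegrations are in general neither holonomy-invariant nor continuous, and the proposition would simply be false. The holonomy invariance of the conditional measures is the content of the Ledrappier/Avila--Viana invariance principle, and its proof is an entropy/exponent argument that consumes precisely the coincidence of the extremal exponents. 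Treating this as a routine ``upgrade'' hides the only place where the main hypothesis does any work. A secondary, fixable flaw: in the case $n\ge 3$ the Douady--Earle barycenter is undefined when $m_x$ carries an atom of mass at least $1/2$; you must first observe that the set of atoms of maximal mass is $PF$-equivariant and, if some atom has mass $\ge 1/2$, consists of one or two points (three such atoms cannot coexist), which reroutes that case to conclusions (2) or (3) before the barycenter argument applies to the remaining case. With the invariance principle properly invoked and the heavy-atom case handled, your argument would go through, but at that point you have reproved \cite{KS13} rather than used it.
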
 
  
  \begin{proof}
  We note that (1) is equivalent to  preserving a H\"older continuous conformal structure on $\E$.
  
The proposition essentially follows from the Continuous Amenable Reduction established 
in \cite{KS13}, specifically Theorem 3.4,  Corollary 3.8, and the first remark after Theorem 3.4. 
The conclusion for 2-dimensional fibers yields that $F$ satisfies (1) or (2) or 
\begin{itemize}
\item[(3$'$)] There exists a finite cover $\tilde F : \tilde  \E \to \tilde  \E$ of $F$ which
 preserves a union of $j \ge 2$ distinct at every point H\"older continuous one dimensional sub-bundles 
 $U=E^1\cup ... \cup E^j$, and the projection of $U$ to $\E$ is invariant under $F$.
\end{itemize}
If $j=2$, then this $U$ is exactly the invariant H\"older continuous field of two lines.
We claim that if $j>2$ then, in fact, (1) holds. Indeed, by passing to another cover $\hat  \E$
if necessary, we can make $E^1$ and $E^2$ orientable and pick nonzero vector 
fields $v_1(x) \in E^1$ and $v_2(x) \in E^2$.
In this basis the lift $\hat F : \hat  \E \to \hat  \E$ becomes a H\"older continuous diagonal 
$GL(2,\R)$-valued function $A(x)=\text{diag}(a_1(x),a_2(x))$. 
Then existence of another invariant sub-bundle 
$E^3$ implies that the functions $a_1$ and $a_2$ are H\"older cohomologous 
\cite[Lemma 7.1]{S13}, that is $a_1(x)/a_2(x)=\phi(fx)/\phi(x)$ for some  
H\"older continuous function $\phi$. It follows that $A$ is uniformly quasiconformal, 
that is  $\|A^n_x\| \cdot \|(A^n_x)^{-1}\|$ is uniformly bounded in $x$ and $n$, 
where $A^n_x = A(f^{n-1}x) \cdots A(fx) A(x)$. 
This implies that $\tilde F$ and $F$ are also uniformly quasiconformal and 
hence $F$ is conformal with respect to a H\"older  continuous Riemannian norm 
on $\E$ by \cite[Corollary 2.5]{KS10}. 
\end{proof}
 
 Now we want to eliminate possibilities (2) and (3) for our cocycle $F$. Recall 
 that $L$ has two eigenvalues of the same modulus on $\EL$. Since $L^4$ is irreducible
 and thus has simple eigenvalues we conclude that this is a pair of complex 
conjugate eigenvalues on $\EL$ different from $i\lambda$, $-i\lambda$. Hence the projective
action of $L|_{\EL}$ on the lines through $0$ does not have a fixed point or an invariant two-point set. 
This property persists for any sufficiently close linear map, and thus holds for $F_p=Df|_{\Ef(p)}$ 
at the fixed point $p=f(p)$ close to $0$. Hence $F$ can not satisfy (2) or (3) and thus must satisfy (1).

\vskip.5cm

\subsection{Jacobian along $\Vf$} \label{Jacobian}
In this section we show that the Jacobian of $f$ along $\Vf$ is cohomologous to a constant.
 This result can be deduced using \cite[Corollary G]{SY}.
Our argument is based on the same idea, but adapted to our setting becomes much shorter, 
so we include it for reader's convenience.

Denote by $m$ the standard Lebesgue measure on $\T^d$, which is preserved by $L$,  
and by $\mu$ the smooth $f$-invariant measure. Then $\mu=h_*(m)$ since both coincide with the 
unique measure of maximal entropy for $f$. Indeed, $h_*(m)$ has maximal entropy since
$m$ has maximal entropy for $L$, and $\mu$ has the same entropy since by the Pesin formula
it equals the sum of positive Lyapunov exponents for $f$, which are the same as 
for $L$.

By the assumption of the proposition we have $h(\VL)=\Vf$.
While $\Vf$,  a priori, is not necessarily an absolutely continuous foliation,
we claim that the conditional  measures of $\mu$ on the leaves of $\Vf$ are absolutely 
continuous. This follows from the assumption 
on the Lyapunov exponents and Lemma \ref{Ledrappier} below.

Let $\V$  be an expanding foliation for $f$. We say that a measurable partition $\xi$ is
subordinate to $\V$ if it  satisfies the following properties.
\begin{enumerate}
\item For all $x$, $\xi(x)\subset \V(x)$ and for a.e. $x$ partition element $\xi(x)$ is bounded and contains a neighborhood of $x$ in $\V(x)$;
\vskip.05cm
\item $\vee_{i\ge 0} f^{-i}\xi$ is the partition into points;
\vskip.1cm
\item $f\xi<\xi$.
\end{enumerate}

\begin{lemma} (Ledrappier \cite{L}, \cite[Lemma 5.5]{SY}) \label{Ledrappier} 
Let $\V$ be an expanding foliation for $f$, let $\mu$ be an invariant measure, and let $\xi$ be a measurable partition subordinate to $\V$. Then the conditional measures of $\mu$ are absolutely continuous on leaves of $\V$ if and only if the following formula for conditional entropy holds
$$
 H_\mu(f^{-1}\xi | \xi)=\int \log \J\, (f|_\V)\, d\mu. 
 $$
\end{lemma}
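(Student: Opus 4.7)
The proof has two directions, and I would use a single consistency relation between conditional measures coming from the $f$-invariance of $\mu$. Since $\V$ is expanding and $f\xi<\xi$, the conditional measure $\mu^{f^{-1}\xi}_x$ is the normalization of $\mu^\xi_x$ restricted to the sub-atom $f^{-1}\xi(x)=f^{-1}(\xi(fx))\subset \xi(x)$; uniqueness of disintegrations together with $f_*\mu=\mu$ forces $f_*\mu^{f^{-1}\xi}_x=\mu^\xi_{fx}$, or equivalently
\[
(f_*\mu^\xi_x)\big|_{\xi(fx)} \,=\, \mu^\xi_x(f^{-1}\xi(x))\cdot \mu^\xi_{fx}.
\]
This relation is the engine for both directions.

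For the direction (absolutely continuous $\Rightarrow$ entropy formula): write $d\mu^\xi_x=\rho_x\, dm_{\V(x)}$ where $m_{\V(x)}$ is leaf-Lebesgue restricted to $\xi(x)$. The change of variables $z=f(y)$ along the leaf turns the consistency relation into the density cocycle identity
\[
\rho_x(y)\,=\,\mu^\xi_x(f^{-1}\xi(x))\cdot \rho_{fx}(fy)\cdot \J(f|_\V)(y),
\]
valid for $\mu$-a.e.\ $y\in f^{-1}\xi(x)$. Setting $\phi(x):=\log \rho_x(x)$ and specializing to $y=x$,
\[
\phi(x)-\phi(fx) \,=\, \log\mu^\xi_x(f^{-1}\xi(x)) + \log\J(f|_\V)(x).
\]
Integrating over $\mu$, the left-hand side vanishes by $f$-invariance (with a standard truncation argument to handle integrability of $\phi$), and rearranging yields the claimed entropy formula.

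For the converse direction I would first prove the general leaf-wise Margulis--Ruelle-type inequality $H_\mu(f^{-1}\xi|\xi)\leq \int\log\J(f|_\V)\,d\mu$ by writing $-\log\mu^\xi_x(f^{-1}\xi(x))=-\log(f_*\mu^\xi_x)(\xi(fx))$, comparing $f_*\mu^\xi_x$ to leaf-Lebesgue on $\xi(fx)$ via the Jacobian, and applying Jensen's inequality to the concave function $\log$ against the probability measure $\mu^\xi_{fx}$; integrating over $\mu$ and using invariance then gives the bound. In the equality case, Jensen's inequality forces the Radon--Nikodym derivative $d(f_*\mu^\xi_x)/dm_{\V(fx)}$ to be $\mu^\xi_{fx}$-a.e.\ proportional to $\J^{-1}\circ f^{-1}$ on $\xi(fx)$; iterating along the forward orbit and using that $\bigvee_{n\geq 0} f^{-n}\xi$ generates the $\sigma$-algebra on leaves (since $\xi$ is subordinate to the expanding $\V$), one propagates this into absolute continuity of $\mu^\xi_x$ with respect to leaf-Lebesgue. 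The main technical obstacle is precisely this equality case: translating a.e.\ equality in Jensen across all iterates into absolute continuity of $\mu^\xi_x$ requires a careful generator/Lebesgue-differentiation argument on the leaves and uniform control of the densities, both of which depend crucially on the subordination of $\xi$ to keep leaf atoms bounded.
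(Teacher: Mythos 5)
The paper does not prove this lemma at all --- it is quoted verbatim from Ledrappier \cite{L} and \cite[Lemma 5.5]{SY} --- so there is no in-paper argument to compare against; I can only assess your sketch on its own terms. Your forward direction (absolute continuity $\Rightarrow$ entropy formula) is the standard Ledrappier argument and is correct: the consistency relation $(f_*\mu^\xi_x)|_{\xi(fx)}=\mu^\xi_x(f^{-1}\xi(x))\,\mu^\xi_{fx}$, the density cocycle identity, and the coboundary cancellation (with the usual lemma that $\int(\phi-\phi\circ f)\,d\mu=0$ whenever the difference is integrable, even if $\phi$ itself is not) are exactly how this is done.

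In the converse direction your architecture is right, but as written the Jensen step is circular: you invoke the Radon--Nikodym derivative $d(f_*\mu^\xi_x)/dm_{\V(fx)}$, which exists only if $\mu^\xi_x$ is already absolutely continuous --- the very conclusion being sought. The standard repair is to first build, on each atom $\xi(x)$, the reference probability measure $\lambda_x$ with density proportional to $\Delta(x,y)=\prod_{k\ge 1}\J(f|_\V)(f^{-k}y)/\J(f|_\V)(f^{-k}x)$ with respect to leaf Lebesgue (the product converges by H\"older continuity of $\J$ and backward contraction along $\V$). One checks that $\lambda$ satisfies the same transformation rule as an absolutely continuous invariant family, so that $\int-\log\lambda_x(f^{-1}\xi(x))\,d\mu=\int\log\J(f|_\V)\,d\mu$, and then applies Jensen to the \emph{countable} partition of $\xi(x)$ into $f^{-1}\xi$-atoms, comparing the weights $\mu^\xi_x(A)$ with $\lambda_x(A)$; this yields $H_\mu(f^{-1}\xi|\xi)\le\int\log\J\,d\mu$ with equality forcing $\mu^\xi_x(A)=\lambda_x(A)$ on all such atoms. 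Iterating over $f^{-n}\xi$ and using that $\bigvee_{n\ge0}f^{-n}\xi$ is the point partition then gives $\mu^\xi_x=\lambda_x$, hence absolute continuity --- this last step you did identify correctly. Without the reference measures $\lambda_x$, neither the inequality nor its equality case can be formulated, so this is the one genuine missing ingredient in your outline.
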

We consider a measurable partition $\xi$ subordinate to $\V=\Vf$, which exists for example by \cite{LS}. 
We let $\xi'=h^{-1}\xi$. Since $h_*m=\mu$, applying the lemma to $L,\VL,\xi'$ and $m$ we get
$$
 H_\mu(f^{-1}\xi | \xi)= H_m(L^{-1}\xi' | \xi')=\int \log \J (L|_{\VL})\, dm=
 \int \log \J (f|_{\Vf})\, d\mu.
$$
The last equality holds since the integrals are equal to the sums of the Lyapunov exponents 
of $L$ and $f$ corresponding to the  foliations $\VL$ and $\Vf$, which are the same by the assumption.
Applying the lemma to $f,\Vf, \xi$, and $\mu$ we conclude that the conditional measures of $\mu$ on the leaves of $\Vf$ are absolutely continuous.
\vskip.1cm

We denote by $\{\mu_x=\mu_{\Vf(x)}:\,x\in \T^d\}$ the system of conditional measures for $\mu$ 
on the leaves of $\Vf$ obtained by pushing forward by $h$ the standard volume on $\VL$. It is 
defined for all $x\in \T^d$, satisfies $\mu_{y}=\mu_x$ for all $y \in \Vf (x)$,  and is pushed by $f$ as
$$
f_*(\mu_x)=k \,  \mu_{fx},\quad\text{where }k^{-1}= \det L|_{\VL}.
$$
Let $\sigma _x$ the volume on $\Vf(x)$ induced by the standard metric on $\T^d$.
Then absolute continuity of the conditional measures of $\mu$ yields that there exists a
measurable function $\rho$ on $\T^d$ such that  $\mu_x = \rho \, \sigma _x$ for $\mu$-a.e. $x$. 
This function is given by the Radon-Nikodym derivative
$$
\rho (y)= \frac {d\mu_{x}}{d\sigma _{x}}(y)=\lim_{r\to 0}\, \frac{\mu_x (B^\Vf (y,r))}{\sigma_x(B^\Vf (y,r))}
\quad\text{for a.e. }y \in \Vf (x).
$$
We denote  the Jacobian of $f$ with respect to the volumes $\sigma _x$ and $\sigma _{fx}$ by
$$
J(x)=\J (f|_\Vf)(x)=\frac {d\sigma _{fx}}{d(f_*\sigma _{x})}(x).
$$
The function $J(x)$ is  H\"older continuous on $\T^d$ and satisfies $f_*(\sigma_x)=J^{-1} \,  \sigma_{fx}$.
Then  $f_*(\mu_x)=k \,  \mu_{fx}$ yields that $\rho (x)=k J(x) \rho(fx)$ for $\mu$-a.e. $x$. Indeed,
$$
(\rho \circ f^{-1}) f_*( \sigma_x)=
 f_*(\rho \sigma_x) = f_*(\mu_x) =k \,  \mu_{fx}= k \rho  \,\sigma _{fx} 
= ( J\circ f^{-1}) k \rho f_*(\sigma _{x}).
$$
We also note that $\rho (x)>0$ for $\mu$-a.e. $x$ since $\mu$ is equivalent to the  standard volume.
Thus we have 
$$
\det L|_{\VL} = J(x) \rho(fx) \rho (x)^{-1}\quad\text{for }\mu\text{-a.e. }x,
$$
 that is, $J$ is measurably 
cohomologous to the constant $ \det L|_{\VL}$. By the measurable Liv\v{s}ic theorem for scalar 
cocycles \cite{Liv}, $\rho$ coincides $\mu$-a.e. with a H\"older continuous function on $\T^d$.

\subsection{The conjugacy $h$ is Lipschitz along $\Vf$}
The proof is an adaptation of arguments in \cite{L2,GKS}.
Let $\bar{\V}^L$ be the linear integral
foliation of
$$
E^{s,L}\oplus E_1^L\oplus\ldots\oplus E_{i-1}^L\oplus
E_{i+1}^L\oplus\ldots\oplus E_l^L.
$$
We define a map
$h_0$ by intersecting  local leaves:
$$
h_0(x)=\V^{f,loc}(h(x))\cap\bar{\V}^{L, loc}(x).
$$
It is easy to check (see \cite{GKS}) that $h_0$ is well-defined,  close to $h$, and
satisfies
\begin{enumerate}
\item $h_0(\VL)=\Vf$, moreover, $h_0(\VL(x))=\Vf(h(x))$ for all $x$ in $\T^d$;
\vskip.1cm
\item $\sup_{x\in\mathbb T^d} d_{\Vf}(h_0(x),h(x))<+\infty$,
where $d_{\Vf}$  is the distance along the leaves;
\vskip.1cm
\item $h_0$ is $C^{1+}$ diffeomorphism along the leaves of $\VL$;
\vskip.1cm
\item $h=\lim_{n\to\infty}h_n$ uniformly on $\T^d$, where $h_n=f^{-n}\circ h_0\circ L^n.$
\end{enumerate}

\vskip.2cm

To prove that $h$ is Lipschitz along $\Vf$ we show that the derivatives
of the maps $h_n$ along $\VL$ are uniformly bounded. With the notation $F^n_x=Df^n|_{\Ef(x)}$  we obtain
$$
\begin{aligned}
 \|D_{\VL(x)}h_n\|
 & \le \| (F^n_{f^{-n}(h_0(L^nx)))})^{-1}\|\cdot \|D_{\VL(L^nx)}h_0\| \cdot \|L^n|_{\EL} \| \\
 & \le \| (  F^n_{h_n(x))}  )^{-1}\| \cdot \|L^n|_{\EL} \|
 \cdot  \sup\{ \|D_{\VL(z)}h_0\|: z\in\T^d\}. \\
 \end{aligned}
 $$
Since  $D_{\VL}h_0$ is continuous on $\T^d$, the last term is finite and it remains to show 
that $\| (  F^n_{x}  )^{-1}\|\cdot \|L^n|_{\EL} \|$ is uniformly bounded
in $x\in \T^d$ and $n\in \N$.

\vskip.1cm
We denote by $\|\cdot\|$ a norm on $\EL$ for which $L|_{\EL}$ is conformal,
and  by $\| \cdot \|_x^f$ a H\"older continuous Riemannian norm on $\Ef$
for which $F=Df|_{\Ef}$ is conformal. Then
$$
    \| F(v) \|_ {f(x)}^f = a(x) \, \|v\|_x^f \;
    \text{ for any } x\in \T^d,\;v\in \Ef(x),\, \text{ where } a(x)=(\J\, f|_{\Vf (x)})^{\frac 12}.
$$
Hence for the operator norm with respect to $\| \cdot \|^f$ we have 
$$
\| (F_{x})^{-1}\|^f = \left( \| F_{x}\|^f\right)^{-1}= a(x)^{-1} .
$$
Since $\J f|_{\Vf}$ is  H\"older  cohomologous to the constant $\det L|_{\EL}$,
we conclude that $a(x)$ is H\"older  cohomologous to the constant $b=  \|L|_{\EL}\|$,
i.e. $ b/a(x)=\phi(Lx)/\phi(x).$ for some H\"older continuous function $\phi:\T^d\to\R_+$. We conclude  that
$$
\|L^n|_{\EL} \| \cdot \| (F^n_{x})^{-1}\|^f =
b^n (a(f^{n-1}x) \cdots a(fx) a(x))^{-1} =  \phi(L^nx) /\phi(x)
$$
is uniformly bounded since $\phi$ is continuous on $\T^d$. Since the
norm $\| \cdot \|^f$ is  equivalent to $\|\cdot\|$ we obtain that
$\| (F_x)^{-1}\|\cdot \|L^n|_{\EL} \|$  is uniformly
 bounded in $y$ and $n$. We conclude that $\|D_{\VL(x)}h_n\| $
 is uniformly bounded in $x$ and $n$, and hence $h=\lim_{n\to\infty}h_n$ is
 Lipschitz along $\Vf$.

A similar argument shows that
$\|(D_{\VL(x)}h_n)^{-1}\|$ is uniformly bounded and hence
$h$ is bi-Lipschitz along $\Vf$. In particular, $D_{\VL}h$ exists and is
invertible almost everywhere with respect to the volume.

\vskip.2cm

\subsection{The conjugacy $h$ is $C^{1+}$ along $\Vf$}

Differentiating $f\circ h= h \circ L$ along $\VL$ on
a set of full Lebesgue measure we obtain
$$
 F_{h(x)} \circ D_{\VL(x)}h = D_{\VL(Lx)} h \circ L |_{\EL(x)} ,
$$
i.e., the cocycles $F_{h(x)}$ and $L |_{\EL(x)}$ are
cohomologous with transfer function $D_{\VL(x)}h$. The bundle $\Ef$
is trivial since it is close to the trivial bundle $\EL$. Therefore,
$F_{h(x)}$ and $L |_{\EL(x)}$ can be viewed as  H\"older
continuous $GL(2,\R)$-valued cocycles over the automorphism $L$.
While in general measurable transfer functions are not
necessarily continuous \cite[Section 9]{PW}, for conformal
cocycles the measurable transfer function coincides almost
everywhere with a H\"older continuous one by \cite[Theorem 2.7]{S15}. 
We conclude that $D_{\VL(x)}h$ is H\"older continuous, and hence  $h$
is a $C^{1+}$ diffeomorphism along $\VL$. $\QED$


\section{Proof of Theorem \ref{partial}}

Since $f$ is $C^1$-close to $L$, it is {\em partially hyperbolic}, more precisely,
there exist a nontrivial $Df$-invariant splitting $E^s\oplus E^c \oplus E^u$ 
of the tangent bundle of $\T^d$,
 a Riemannian metric on $\T^d$, and 
constants $\nu<1,\,$ $\hat\nu >1,\,$ $\gamma,$ $\hat\gamma\,$ such that 
for any $x \in \M$ and unit vectors  
$\,v^s\in E^{s,f}(x)$, $\,v^c\in E^{c,f}(x)$, and $\,v^u\in E^{u,f}(x)$,
$$
\|D_xf(v^s)\| < \nu <\gamma <\|D_xf(v^c)\| < \hat\gamma <
\hat\nu <\|D_xf(v^u)\|.
$$
 The sub-bundles $E^{s,f}$, $E^{u,f}$, and $E^{c,f}$ are called, respectively,
 stable, unstable, and center.
The  stable and unstable sub-bundles are tangent to the stable and unstable
foliations $W^{s,f}$ and $W^{u,f}$, respectively. The leaves of these  foliations are
as smooth as $f$.
Moreover, as $f$ is a $C^1$-small perturbation of $L$, the center bundle $E^{c,f}$
is tangent to  a foliation $W^{c,f}$ with $C^{1+}$ leaves.

\vskip.1cm

Since the center Lyapunov exponents of $f$ are the same, we can use the following result
by Avila and Viana to conclude that $f$ is not accessible.
A partially hyperbolic  diffeomorphism $f$  is called {\em accessible}  if any two points 
 in $\M$ can be connected by an $su$-path, that is,
by a concatenation of finitely many subpaths which lie entirely in a single 
 leaf of $W^s$ or  $W^u$. 
\vskip.2cm

\noindent \cite[Theorem 8.1]{AV}  
{\it Let $L$ be as in Theorem \ref{partial}. Then there exists a neighborhood $U$ 
of $L$ in the space of $C^N$ volume preserving diffeomorphisms of $\T^d$ 
such that if  $f\in U$ is accessible then its center Lyapunov exponents are distinct.}
\vskip.2cm 

For the perturbation $f$ which is not accessible, it was proved in  \cite[Section 6]{RH} 
(cf. \cite[Remark 8.3]{AV}) that  
$f$ and $L$ are conjugate by a bi-H\"older homeomorphism $h$ which is $C^{1+\text{H\"older}}$
 along the leaves of the center foliation.
 In the above theorem, the regularity $N$ is the one needed to apply the KAM-type results in \cite[Section 6]{RH}, which is exactly  the regularity that  we require in Theorem \ref{partial}.

Now we claim that $h$ preserves the volume, that is,  $h_*(m)=\mu$. The argument is the 
same as for Anosov case: both $h_*(m)$ and $\mu$ are  measures of maximal entropy  
and hence coincide by uniqueness. The uniqueness of the measure of maximal entropy for 
$L$ is well-known (\cite{B}, see also~\cite[Theorem 2.6]{Sch2}). 
The uniqueness for $f$ follows since $f$ and $L$ are topologically conjugate. 


Now use an approach similar to the Anosov case, however now the foliations $V_i^f$ are one-dimensional.  For each $i$ the conditional 
measures of $\mu$ along $V_i^f$ are absolutely continuous by the  argument in Section \ref{Jacobian}.  Therefore, their densities are given, up to normalization, by the ratio of the Jacobians \cite[Proposition 2.3 A4]{SY}:
$$
\frac {d\mu_{x}}{d\sigma _{x}}(y)= c \lim_{n\to \infty}\, \frac{\J (f^{n}|_\Vf)(y)}{\J (f^{n}|_\Vf)(x)}.
$$
Since $\J (f |_\Vf)(x)$ is H\"older continuous and the foliation $\Vf$ is contracting, 
it follows that the densities are H\"older continuous on $\T^d$.
This implies that Proposition \ref{smooth along V} holds  for each $i$, that is, if $\,h(V^L_i)=V^f_i$ 
then $h$ is a $C^{1+}$ diffeomorphism along $V_i^L$. Indeed, the conjugacy $h$ 
maps the conditional measures on the leaves of $V_i^L$ to those on the leaves of $V_i^f$
and hence $h^{-1}$ is obtained by integrating the H\"older continuous density along the 
one-dimensional leaves.
\vskip.1cm
We note that $h(W^{u,L}) = W^{u,f}$ and $h(W^{s,L}) = W^{s,f}$. Indeed, 
$$
d(f^nx,f^ny) \to 0 \;\Longrightarrow \; d(L^nh^{-1}(x),L^nh^{-1}(y)) \to 0 
\;\Longrightarrow \; h^{-1}(y) \in W^{s,L}(h^{-1}(x))
$$
Thus $h^{-1}(W^{s,f}(x)) \subset W^{s,L}(h(x))$ and equality follows as $h$ is a homeomorphism.

Now the same inductive process as in the Anosov case shows that $h$ is a $C^{1+}$ 
diffeomorphism along $W^{u,L}$, and similarly is  $C^{1+}$ along $W^{s,L}$. We recall that
$h$ is smooth along the leaves of $W^{c,L}$. Applying Journ\'e  Lemma \ref{Journe} twice we obtain
that $h$ is $C^{1+}$ along $W^{u,L} \oplus W^{u,L}$ and then  on $\T^d$.
\vskip.3cm

If $f$ is $C^\infty$ close to $L$ then \cite[Section 6]{RH} gives that $h$ is also $C^\infty$ 
along along the center foliation. If $d=4$ then $E^s$ and $E^u$ are one-dimensional and 
their leaves are $C^\infty$ manifolds. In this case the densities of the conditional measures
are $C^\infty$ along the leaves because they are given by the infinite product of ratios of Jacobians. Then $h$ is $C^\infty$ 
along these foliations and hence  is $C^\infty$ on $\T^d$ by Journ\'e Lemma.
$\QED$



\begin{thebibliography}{XXXXX}

\bibitem[AV10]{AV} A. Avila, M. Viana. {\em Extremal Lyapunov exponents: an invariance 
            principle and applications.} Inventiones Mathematicae  (2010) Vol. 181, Issue 1, 115-178.

\bibitem[B67]{B} K. Berg. {\it Entropy of torus automorphisms.} 1968 Topological Dynamics (Symposium, Colorado State Univ., Ft. Collins, Colo., 1967) pp. 67--79 Benjamin, New York.

\bibitem[G08]{G} A. Gogolev.
       {\em Smooth conjugacy of Anosov diffeomorphisms
       on higher dimensional tori.} Journal of Modern Dynamics, 2, no. 4,
       645-700. (2008)
       
 \bibitem[GKS11]{GKS}    A. Gogolev,  B. Kalinin, V. Sadovskaya, with appendix 
         by R. de la Llave.\, {\em Local rigidity for Anosov automorphisms.} 
        Math. Research Letters, {\bf 18} (2011), no. 05, 843-858. 

 \bibitem[HPS77]{HPS} M. Hirsch, C. Pugh, M. Shub.
        {\em Invariant manifolds.}
        Lecture Notes in Math., 583, Springer-Verlag, (1977).

\bibitem[J88]{J}   J.-L. Journ\'e.
          {\em A regularity lemma for functions of several variables.}
          Revista Matem\'atica Iberoamericana {4} (1988),
          no. 2, 187-193.
          
\bibitem[K11]{K11} B. Kalinin. {\em Liv\v{s}ic Theorem for matrix cocycles.} 
                Annals of Math. 173 (2011), no. 2, 1025-1042.

\bibitem[KS03]{KS03}  B. Kalinin, V. Sadovskaya.
          {\em On local and global rigidity of quasiconformal Anosov
          diffeomorphisms.}  Journal of the Institute of Mathematics
          of Jussieu, 2 (2003), no. 4, 567-582.


\bibitem[KS09]{KS09} B. Kalinin, V. Sadovskaya.
            {\em On Anosov diffeomorphisms with asymptotically conformal
            periodic data.}  Ergodic Theory Dynam. Systems, 29 (2009), 117-136.

\bibitem[KS10]{KS10} B. Kalinin, V. Sadovskaya.
          {\em Linear cocycles over hyperbolic systems and criteria of conformality.}
          Journal of Modern Dynamics, vol. 4 (2010), no. 3, 419-441.

\bibitem[KS13]{KS13} B. Kalinin, V. Sadovskaya.          
          {\em Cocycles with one exponent over partially hyperbolic systems.}
           Geometriae Dedicata, Vol. 167, Issue 1 (2013), 167-188.   

\bibitem[KH95]{KH}  A. Katok, B. Hasselblatt.
              {\em Introduction to the modern theory of dynamical systems.}
              Encyclopedia of Mathematics and   its Applications,
              vol. 54. Cambridge University Press, London-New York, 1995.
              
\bibitem[L84]{L} F. Ledrappier. {\em Propri\'et\'es ergodiques des mesures de Sina\"i. } 
          Inst. Hautes Etudes Sci. Publ. Math. No. 59 (1984), 163-188. 

\bibitem[LS83]{LS} F. Ledrappier, J.-M. Strelcyn. {\em A proof of the estimation from below in Pesin's entropy formula.} Ergodic Theory Dynam. Systems 2 (1982), no. 2, 203--219 (1983). 

\bibitem[Liv71]{Liv} A. N. Liv\v{s}ic. {\em Homology properties of Y-systems.}
                Math. Zametki 10, 758-763, 1971.

\bibitem[dlL87]{L0}  R. de la Llave.
              {\em Invariants for smooth conjugacy of hyperbolic
              dynamical systems II.} Commun. Math. Phys., {109} (1987), 368-378.

\bibitem[dlL92]{L1}  R. de la Llave.
             {\em Smooth conjugacy and SRB measures for uniformly
              and non-uniformly hyperbolic systems.} Comm. Math. Phys.
              {\bf 150} (1992), 289-320.

\bibitem[dlL02]{L2}  R. de la Llave.
            {\em Rigidity of higher-dimensional conformal Anosov systems.}
            Ergodic Theory Dynam. Systems 22 (2002), no. 6, 1845--1870.

\bibitem[dlL04]{L3}  R. de la Llave.
              {\em Further rigidity properties of conformal Anosov systems.}
              Ergodic Theory Dynam. Systems 24 (2004), no. 5, 1425--1441.

\bibitem[dlLM88]{LM}  R. de la Llave, R. Moriy\'on.
              {\em Invariants for smooth conjugacy of hyperbolic dynamical systems IV.} 
              Commun. Math. Phys., {116} (1988), 185-192.


\bibitem[Pes04]{Pes} Ya. Pesin.
            {\em  Lectures on partial hyperbolicity and stable ergodicity.}
             EMS, Zurich, (2004).

\bibitem[P90]{P}   M. Pollicott.
              {\em $C^k$-rigidity for hyperbolic flows II.}
              Isr. Jour. Math., {69} (1990), 351-360.

\bibitem[PW01]{PW} M. Pollicott, C. P. Walkden.
                {\em Liv\v{s}ic theorems for connected Lie groups.}
                Trans. Amer. Math. Soc., 353(7), 2879-2895, 2001.

\bibitem[RH05]{RH}  F. Rodriguez Hertz. {\em Stable ergodicity of certain linear 
             automorphisms of the torus.} Annals of Math., 162 (2005), 65-107.              

\bibitem[S13]{S13} V. Sadovskaya.        {\em Cohomology of $GL(2,\R)$-valued cocycles over         
 hyperbolic systems.}  \\
 Discrete and Continuous Dynamical Systems,          vol. 33, no. 5 (2013), 2085-2104.
           
\bibitem[S15]{S15} V. Sadovskaya.  
      {\em Cohomology of  fiber bunched cocycles over hyperbolic systems.}
    Ergodic Theory Dynam. Systems, Vol. 35, Issue 8 (2015), 2669-2688.

\bibitem[SY]{SY} R. Saghin, J. Yang. 
      {\em Lyapunov exponents and rigidity of Anosov automorphisms and skew products.} 
      Preprint.


\bibitem[Sch16]{Sch2} K. Schmidt.
            {\em Representations of toral automorphisms.}
            Topology and its Applications.  Vol. 205 (2016), 88-116.

\end{thebibliography}
\end{document}